\renewcommand{\epsilon}{\varepsilon}
\newcommand{\cat}[1]{\ensuremath{\mathcal{#1}}}
\newcommand{\catname}[1]{\mathsf{#1}}
\newcommand{\catt}[1]{\mathsf{#1}}
\newcommand{\Set}{\catname{Set}}
\newcommand{\Cat}{\catname{Cat}}
\newcommand{\CAT}{\catname{CAT}}
\newcommand{\VCat}{\cat V\dash\Cat}
\newcommand{\VCAT}{\cat V\dash\CAT}
\newcommand{\op}{\mathrm{op}}
\newcommand{\pF}[1]{\mathfrak{#1}}
\newcommand{\pFun}[2]{\pF{#1}\left( {#2}\right) }
\newcommand{\twcat}{\mathcal{A} }
\newcommand{\DESC}[2]{\mathsf{Desc}_{#1}\left( #2 \right)}
\newcommand{\CODESC}[1]{\mathsf{CoDesc}\left( #1 \right)}
\newcommand{\CKomp}[1]{\mathsf{K}^{#1}}
\newcommand{\Komp}[2]{\mathcal{K}_{#1}^{#2}}
\newcommand{\ForD}[2]{\mathsf{d}_{#1}^{#2}}
\newcommand{\EQ}[1]{\mathsf{Eq}\left({#1}\right)}
\newcommand{\EQP}{\EQ{p}}
\newcommand{\iso}{\cong}
\newcommand{\adj}{\dashv}
\newcommand{\dash}{\text{-}}
\DeclareMathOperator{\LKan}{\mathsf{LKan}}
\DeclareMathOperator{\Cauchy}{\mathfrak C}
\newtheorem{lemma}{Lemma}[section]
\newtheorem{proposition}[lemma]{Proposition}
\newtheorem{theorem}[lemma]{Theorem}
\theoremstyle{definition}
\newtheorem{remark}[lemma]{Remark}
\begin{document}

  \title[Effective Descent for Split
Fibrations]{Cauchy Completeness, Lax Epimorphisms and Effective Descent for
Split Fibrations}

\author[F. Lucatelli Nunes]{Fernando Lucatelli Nunes$^1$}
\author[R. Prezado]{Rui Prezado$^2$}
\author[L. Sousa]{Lurdes Sousa$^3$}

\address[1]{Utrecht University, The Netherlands}
\address[1,2,3]{University of Coimbra, CMUC, Department of Mathematics,
	Portugal}
\address[3]{Polytechnic Institute of Viseu, ESTGV, Portugal}

\address{}

\email[1]{f.lucatellinunes@uu.nl}
\email[2]{rui.prezado@student.uc.pt}
\email[3]{sousa@estv.ipv.pt}

\thanks{The research was supported through the programme ``Oberwolfach Leibniz
Fellows'' by the Mathematisches Forschungsinstitut Oberwolfach in 2022, and
partially supported by the Centre for Mathematics of the University of Coimbra
- UIDB/00324/2020, funded by the Portuguese Government through FCT/MCTES.  The
second author was supported by the grant PD/BD/150461/2019 funded by Fundação
para a Ciência e Tecnologia (FCT)}

\keywords{Cauchy completions, lax epimorphisms, effective descent morphisms, fully faithful morphisms, enriched categories, split fibrations}

\subjclass{18A22, 18F20, 18D20, 18A20, 18N10}

  \begin{abstract}
    For any suitable monoidal category $\cat V $, we find that $\cat V
$-fully faithful lax epimorphisms in \( \cat V \dash \Cat \) are precisely
those \( \cat V \)-functors \( F \colon \cat A \to \cat B \) whose induced
$\cat V $-functors  $\Cauchy F \colon \Cauchy \cat A \to \Cauchy \cat B $
between the Cauchy completions are equivalences. For the case $\cat V = \Set
$, this is equivalent to requiring that the induced functor $F^* \colon
\CAT(\cat A,\Cat) \to \CAT(\cat B, \Cat)$ is an equivalence.  

By reducing the study of effective descent functors with respect to the
indexed category of split (op)fibrations  \(\cat F\) to the study of the
codescent factorization, we find that the observations above on fully faithful
lax epimorphisms provide us with a characterization of  (effective) \(\cat
F\)-descent morphisms in the category of small categories \(\Cat \); namely,
we find that they are precisely the (effective) descent morphisms with respect
to the indexed categories of \textit{discrete} opfibrations --- previously
studied by Sobral. We include some comments on the Beck-Chevalley condition
and future work. 

  \end{abstract}

  \maketitle

  \section*{Introduction}
    \label{section:Introduction}
    Let $\catt C $ be a category with pullbacks and $p: e\to b $ a morphism in
$\catt C $.  The kernel pair of $p $ induces the internal groupoid $\EQP$,
whose underlying truncated simplicial object in $\catt C $ is given by the
diagram \eqref{eq:kernel-pair-internal-groupoid} below. In this setting, each
indexed category $\pF F : \catt{C} ^\op \to \CAT $ has the associated category
$\DESC{\pF F}{p}$ of internal $\pF F $-actions of the internal groupoid
$\EQP$, also called the \textit{category of $\pF F $-descent data for $p$}.
This category comes with a factorization given by the diagram
\eqref{eq:descent-factorization-introduction} below, where $\ForD{\pF F}{p}$
is the forgetful functor that discards descent data.  \\
\noindent\begin{minipage}{.5\linewidth}
  \begin{equation}\label{eq:kernel-pair-internal-groupoid}
    \begin{tikzcd}
      e \times_b e \times_b e \ar[r,shift left=2]
                              \ar[r,shift right=2]
                              \ar[r]
      & e \times_b e \ar[r,shift left=2,"\pi_e"]
                     \ar[r,shift right=2,"\pi^e",swap]
      & e \ar[l]
    \end{tikzcd}
  \end{equation} 	
\end{minipage}%
\begin{minipage}{.5\linewidth}
  \begin{equation}\label{eq:descent-factorization-introduction}
    \begin{tikzcd}[column sep=small]
      \pFun{F}{b} \ar[rr,"\pFun{F}{p}"] 
                  \ar[rd,"\Komp{\pF F}{p}",swap]
      && \pFun{F}{e} \\
      & \DESC{\pF F}{p} \ar[ru,"\ForD{\pF F}{p}",swap]
    \end{tikzcd}
  \end{equation} 
\end{minipage}\\
In the context of Janelidze-Galois Theory (\textit{viz.} \cite{MR1822890}) and
Grothendieck Descent Theory (\textit{viz.}  \cite{MR1466540, MR3806333}), one
is interested in characterizing the morphisms $p$ in $\catt C $ such that the
comparison $\Komp{\pF F}{p} $ is an equivalence (or just fully faithful); that is to
say, in characterizing the \textit{effective $\pF{F} $-descent (respectively,
$\pF{F} $-descent) morphisms of $\catt C$}. We refer the reader to
\cite{MR1285884, MR2056587} for comprehensive introductions.

By the Bénabou-Roubaud Theorem (see \cite{MR255631} or
\cite[Theorem~8.5]{MR3806333} for a generalization), whenever $\pF{F}$ comes
from a bifibration satisfying the Beck-Chevalley condition (see, for instance,
\cite[Section~4]{MR4266479}), $p$ is of effective $\pF{F} $-descent if and
only if $\pFun{F}{p}$ is monadic. This provides us with a way of studying
(effective) $\pF{F}$-descent morphisms via the Beck's monadicity theorem.

If $\pF{F}$ does not satisfy the Beck-Chevalley condition, the equivalence
will not necessarily hold. A particular prominent example of this setting is the
indexed category of discrete (op)fibrations \( \cat F_D = \CAT(-,\Set) \),
thoroughly studied by Sobral in \cite{MR2107401} (see also
\cite[Remark~4.8]{MR4266479}). 

The main point of this work is to extend Sobral's techniques and viewpoint 
on discrete (op)fibrations~\cite{MR2107401} to other settings. In
this paper, we study the case of the indexed category \( \cat F \colon \Cat
^\op \to \CAT, e\mapsto \CAT(\catt e,\Cat) \); by Grothendieck's construction,
is essentially the indexed category of split opfibrations, which is another
glaring example of an indexed category that does not satisfy the
Beck-Chevalley condition. 

Recall that a lax epimorphism \( p \colon e \to b \) in \( \VCat \) is a \(
\cat V \)-functor such that 
\begin{equation}
  \label{eq:lax.epi.def} 
  \VCat(p, x) \colon \VCat (b, x) \to \VCat (e, x)  
\end{equation} 
is fully faithful for any small \( \cat V \)-category \( x \).  Underlying
Sobral's study of  effective $\cat F _D $-descent morphisms $p$, there are two
fundamental steps.  The first step is to construct a factorization of $p$ such
that its image is  (isomorphic to)
\eqref{eq:descent-factorization-introduction}.  The second step of
\cite{MR2107401} relies on the characterization of lax epimorphisms in
\(\Cat\), that is, the functors \(p\) such that \(\Cat(p, c)\) is fully
faithful for any \(c\).

We revisit these two fundamental steps of \cite{MR2107401}, giving a
systematic view over them, in Sections
\ref{section:Grothendieck-Descent-Codescent} and
\ref{section:Fully-faithful-lax-epimorphisms}. Since it is suitable for our
future work, we do that in the \( \cat V \)-enriched setting. The reader,
however, can opt to always consider the case \( \cat V=\Set \). 

In Section \ref{section:Grothendieck-Descent-Codescent}, we show how we can
reduce the problem of studying effective $\cat F $-descent morphisms to the
study of the codescent factorization induced by
\eqref{eq:kernel-pair-internal-groupoid}: namely, the factorization given by
the universal property of the weighted colimit called \textit{codescent
category} (see, for instance, \cite{MR1935980} or \cite[Lemma 3.3]{MR3833110}).
This observation leads to a straightforward formal result that characterizes
effective $\pF{F}$-descent morphisms whenever the domain of $\pF{F}$ is a
$2$-category with codescent objects and $\pF{F}$ preserves suitable
two-dimensional limits: namely, descent objects.

In Section \ref{section:Fully-faithful-lax-epimorphisms}, we thoroughly 
study the characterization of fully faithful lax epimorphisms in \(\VCat\).
Inspired by its relation with the (co)presheaf categories,  we show its
relation with the Cauchy completion pseudofunctor, which we denote by \(
\Cauchy \). We prove the equivalence of the following statements (Theorem
\ref{thm:sect2}) for monoidal categories \( \cat V \):
\begin{itemize}[noitemsep]
  \renewcommand{\labelitemi}{--}
  \item
    \( p \) is a fully faithful lax epimorphism.
  \item
    \( \Cauchy p \) is an equivalence.
  \item
    \( \VCAT(p, \cat V) \) is an equivalence.
\end{itemize}

In Section \ref{section:Discrete-and-Split-Fibrations}, the characterization
of fully faithful lax epimorphisms together with the formal result of Section
\ref{section:Grothendieck-Descent-Codescent} provide us with a proof that a
morphism is of \textit{(effective) \(\cat F\)-descent if and only if it is of
(effective) \( \cat F_D \)-descent.} This means that Sobral's characterization
can be plainly extended to the case of the indexed category of split fibrations. 

We finish Section \ref{section:Discrete-and-Split-Fibrations} by discussing a
straightforward example related to the well-known fact that our indexed
category indeed does not satisfy the Beck-Chevalley condition.

We end the paper with Section \ref{section:Epilogue}. It gives a brief account
of some problems following this line of work: for example, we state open
problems in the enriched setting and the  $( \cat T,\cat V)$-categorical
setting (\textit{viz.}  \cite{MR1957813, MR2491799}).

  \section{Grothendieck Descent and Codescent}
    \label{section:Grothendieck-Descent-Codescent}
    In this section, we consider an arbitrary $2$-category \(\twcat\) with lax
codescent objects, but the reader may safely assume \(\twcat = \Cat\), which is
the scope of our main results in
Sect.~\ref{section:Discrete-and-Split-Fibrations} (see, for instance,
\cite{MR1935980} or \cite[p.~42]{MR3833110} for definitions of the
\textit{two-dimensional colimit} known as \textit{lax codescent category}).

Given a $2$-functor \( \pF{F}: \twcat^\op \to \CAT \) and a morphism \( p
\colon e\to b\) in \( \twcat \), we consider the image of the diagram
\eqref{eq:kernel-pair-internal-groupoid} by \(\pF{F}\): namely, the diagram
\eqref{eq:the-image-of-the-internal-groupoid-by-f} below. The universal
property of the \textit{lax descent object} 
\eqref{eq:the-image-of-the-internal-groupoid-by-f} in $\CAT $  induces the
\textit{descent factorization} \eqref{eq:descent-factorization-introduction}
of $\pFun{F}{p} $  (see, for instance, \cite[Lemma~3.6]{MR4266479} for this
description via the \textit{two-dimensional limit lax descent object}). 
\textit{We say that such a morphism \(p\) is of effective \( \pF F \)-descent
(\( \pF F \)-descent) whenever \( \Komp{}{\EQP} \) is an equivalence (resp.
fully faithful).}

We get the factorization of \(p\) in $\twcat$, depicted in diagram
\eqref{eq:descent-factorization-introduction-codescent} below, by the
\textit{lax codescent object} $\CODESC{\EQP}$ of the diagram
\eqref{eq:kernel-pair-internal-groupoid} in $\twcat$. 

\noindent
\begin{minipage}{.4\linewidth}
  \begin{equation}
    \label{eq:descent-factorization-introduction-codescent}
    \begin{tikzcd}[column sep=tiny]
      e \ar[rr,"p"] \ar[rd,"\ForD{}{\EQP}",swap] && b \\
      & \CODESC{\EQP} \ar[ru,"\Komp{}{\EQP}",swap]
    \end{tikzcd}
  \end{equation} 
\end{minipage}
\begin{minipage}{.6\linewidth}
  \begin{equation}
    \label{eq:the-image-of-the-internal-groupoid-by-f}
    \begin{tikzcd}[column sep=small]
      \pFun{F}{e} \ar[rr,shift left=2,"\pFun{F}{\pi_e}"]
                  \ar[rr,shift right=2,"\pFun{F}{\pi^e}",swap]
      && \pFun{F}{e \times_b e} \ar[r,shift right=2]
                               \ar[r]
                               \ar[r,shift left=2]
                               \ar[ll]
      & \pFun{F}{e \times_b e \times_b e} 
    \end{tikzcd}
  \end{equation} 	
\end{minipage}

If $\pF{F}: \twcat^\op \to \CAT$ preserves lax descent objects, then the
image of \eqref{eq:descent-factorization-introduction-codescent} by $\pF{F}$
is isomorphic to \eqref{eq:descent-factorization-introduction}. In
particular:
\begin{lemma}\label{lemma:fundamental-basic}
  Let $\pF{F}: \twcat ^\op \to \CAT$ be a $2$-functor that preserves
  two-dimensional limits. A morphism $p:e\to b $ is of effective \( \pF{F}
  \)-descent (\( \pF{F} \)-descent) if, and only if, $\pFun{F}{\CKomp{\EQP}} $
  is an equivalence (fully faithful). 
\end{lemma}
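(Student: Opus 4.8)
The plan is to exploit the two factorizations side by side: the codescent factorization \eqref{eq:descent-factorization-introduction-codescent} of $p$ in $\twcat$, and the descent factorization \eqref{eq:descent-factorization-introduction} of $\pFun{F}{p}$ in $\CAT$. The key structural observation, already flagged in the paragraph preceding the statement, is that applying a $2$-functor $\pF F$ that preserves descent objects to \eqref{eq:descent-factorization-introduction-codescent} yields a factorization of $\pFun{F}{p}$ through $\pFun{F}{\CODESC{\EQP}}$, and that this target is canonically isomorphic to $\DESC{\pF F}{p}$. So the heart of the argument is to make this isomorphism precise and compatible with both factorizations.

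First I would recall that $\CODESC{\EQP}$ is by definition the lax codescent object (a weighted colimit) of the truncated simplicial diagram \eqref{eq:kernel-pair-internal-groupoid} in $\twcat$, while $\DESC{\pF F}{p}$ is the lax descent object (the corresponding weighted limit) of the diagram \eqref{eq:the-image-of-the-internal-groupoid-by-f} in $\CAT$. Since a $2$-functor $\pF F \colon \twcat^{\op} \to \CAT$ sends the colimit-cocone exhibiting $\CODESC{\EQP}$ to a limit-cone in $\CAT$ over $\pF F$ applied to \eqref{eq:kernel-pair-internal-groupoid}, and since by hypothesis $\pF F$ preserves descent objects, the object $\pFun{F}{\CODESC{\EQP}}$ satisfies the universal property of the lax descent object of \eqref{eq:the-image-of-the-internal-groupoid-by-f}. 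Hence there is an equivalence (indeed a canonical isomorphism, as both are defined by strict $2$-limit universal properties) $\DESC{\pF F}{p} \iso \pFun{F}{\CODESC{\EQP}}$. The second step is to trace through the universal properties to check that this identification carries $\Komp{\pF F}{p}$ to $\pFun{F}{\Komp{}{\EQP}}$ and $\ForD{\pF F}{p}$ to $\pFun{F}{\ForD{}{\EQP}}$; this is a diagram chase comparing the two cocones/cones on the nose, using that $\pF F$ is a strict $2$-functor so it respects the structural $2$-cells defining both factorizations.

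Once the square
\begin{equation*}
  \begin{tikzcd}
    \pFun{F}{b} \ar[r,"\pFun{F}{p}"] \ar[d,"\Komp{\pF F}{p}",swap]
    & \pFun{F}{e} \\
    \DESC{\pF F}{p} \ar[r,"\sim"] & \pFun{F}{\CODESC{\EQP}} \ar[u,"\pFun{F}{\ForD{}{\EQP}}",swap]
  \end{tikzcd}
\end{equation*}
is seen to commute with the horizontal bottom arrow an equivalence, the conclusion is immediate: $\Komp{\pF F}{p}$ is an equivalence (resp. fully faithful) if and only if $\pFun{F}{\Komp{}{\EQP}}$ is, because the two differ by composition with an equivalence, and equivalences (resp. fully faithful functors) satisfy the evident two-out-of-three / closure properties. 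By the definition of (effective) $\pF F$-descent recalled just above the statement, $p$ is of effective $\pF F$-descent (resp. $\pF F$-descent) precisely when $\Komp{\pF F}{p}$ is an equivalence (resp. fully faithful), which completes the proof.

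The main obstacle is the bookkeeping in the second step: one must be careful that "preserves two-dimensional limits" is being used in exactly the form needed, i.e. that $\pF F$ sends the \emph{lax} codescent cocone to a \emph{lax} descent cone and that this is enough to invoke the universal property with the correct variance and the correct directions of the comparison $2$-cells. The potential pitfall is a variance slip — $\pF F$ is contravariant on $1$-cells but covariant on $2$-cells — so I would set up the weight for the lax descent object explicitly and verify that $\pF F$ applied to the weighted-colimit cocone is literally a weighted-limit cone for that same weight, rather than for its dual. Everything else is formal manipulation of universal properties and the standard stability properties of equivalences and fully faithful functors.
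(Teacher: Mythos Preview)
Your proposal is correct and follows essentially the same approach as the paper: the paper's ``proof'' is just the sentence preceding the lemma, observing that when $\pF F$ preserves lax descent objects, the image under $\pF F$ of the codescent factorization \eqref{eq:descent-factorization-introduction-codescent} is isomorphic to the descent factorization \eqref{eq:descent-factorization-introduction}, whence $\Komp{\pF F}{p}$ and $\pFun{F}{\CKomp{\EQP}}$ differ by an equivalence. You have simply unpacked this one-line observation carefully, including the variance check, which the paper leaves implicit.
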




  \section{Fully faithful lax epimorphisms}
    \label{section:Fully-faithful-lax-epimorphisms}
    \textit{Throughout this section, let \( \cat V \) be a symmetric monoidal
closed, complete and cocomplete category.} We consider the $2$-category \(
\VCat \) of small \( \cat V \)-categories.

A morphism \( p \colon e \to b \) in a $2$-category $\cat A$ is a \textit{lax
epimorphism } if $\cat A (p, c) $ is fully faithful in $\CAT $ for any $c\in
\cat A $. This is the dual of the notion of fully faithfulness.  In
particular, a \( \cat V \)-functor \( p \colon e \to b \) is a \textit{lax
epimorphism} whenever \eqref{eq:lax.epi.def}  is fully faithful for every
small \( \cat V \)-category \( x \).

A morphism $p \colon e \to b $  of $\VCat $ is \textit{$\cat V$-fully
faithful} if, for any $x, y\in e $, the morphism $p : e(x, y) \to b( px, py) $
in $\cat V $ is invertible. It is easy to see that, if $q$ has an adjoint in
$\VCat$, $q$ is a $\cat V $-fully faithful morphism if, and only if, $q$ is
fully faithful in the $2$-category $\VCat $ (see
\cite[Lemma~5.2]{lucatelli_sousa}).

The main point of this section is to study characterizations of the morphisms
that are simultaneously $\cat V$-fully faithful and lax epimorphic in \( \VCat
\). In particular, we give a characterization in terms of the Cauchy
completions of the \( \cat V \)-categories. We recall basic aspects about
those below.

\subsection{Cauchy completion}

An object \(a\) of a (possibly large) \( \cat V \)-category \( \catt C \) is
\textit{tiny} if the \( \cat V \)-functor \(\catt C(a,-) \) preserves
colimits (see tiny in \cite{BD86}, or \textit{small-projective} in
\cite{Kel05}). For a small \( \cat V \)-category \(e\), we denote by $\Cauchy
e $ the full \( \cat V \)-subcategory of \( \VCAT[e, \cat V] \) consisting of the
tiny objects of \( \VCAT[e, \cat V] \).

\textit{ Henceforth, we assume that $\cat V $ is such that $\Cauchy e $,
called the \textit{Cauchy completion of $e$}, is essentially small for any  \(
e\in \VCat \)}. This is true for many base categories \( \cat V \). We are
mainly interested in the cases \( \cat V = \Set \) and \( \cat V = \Cat \);
other examples include the extended real line, or even more generally any
small quantale.

Recall that \textit{tiny objects are preserved by equivalences}. More
generally, we have:

\begin{lemma}
  Let $\left( F\dashv G\right) \colon \catt C \to \catt D $  be a $\cat V
  $-adjunction between (possibly large) $\cat V $-categories. If  $G$ is
  colimit-preserving, then $F$ preserves tiny objects. 
\end{lemma}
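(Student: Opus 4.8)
The plan is to read everything off the hom-object isomorphism supplied by the adjunction. Writing $F \colon \catt C \to \catt D$ for the left adjoint, a $\cat V$-adjunction $F \dashv G$ gives, for each object $a$ of $\catt C$, a $\cat V$-natural isomorphism
\[
  \catt D(Fa, -) \;\cong\; \catt C(a, G-)
\]
of $\cat V$-functors $\catt D \to \cat V$. Since preservation of (weighted) colimits is a property of a $\cat V$-functor that is invariant under $\cat V$-natural isomorphism, it suffices to show that the composite $\catt C(a,-)\circ G \colon \catt D \to \cat V$ preserves colimits whenever $a$ is tiny in $\catt C$.

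This composite is a composite of two colimit-preserving $\cat V$-functors: $G$ preserves colimits by hypothesis, and $\catt C(a,-)$ preserves colimits by the very definition of $a$ being tiny. As a composite of colimit-preserving $\cat V$-functors preserves colimits, so does $\catt C(a, G-)$, and hence so does $\catt D(Fa,-)$ by the isomorphism above; that is, $Fa$ is tiny in $\catt D$. Concretely, given any weighted colimit cocone in $\catt D$, $G$ sends it to a weighted colimit cocone in $\catt C$, which $\catt C(a,-)$ then sends to a weighted colimit cocone in $\cat V$; transporting along the isomorphism, $\catt D(Fa,-)$ does the same. (Since $\catt C$ and $\catt D$ may be large, "preserves colimits" is read throughout as "preserves every weighted colimit that exists", which does not affect the argument.)

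The only point demanding any care — and the nearest thing to an obstacle — is to make sure one invokes the isomorphism $\catt D(Fa,-)\cong\catt C(a,G-)$ as an isomorphism of $\cat V$-\emph{functors}, i.e. that it is genuinely $\cat V$-natural in the second variable, so that colimit-preservation transfers along it rather than merely pointwise invertibility being available; this is routine enriched-adjunction bookkeeping, and everything else is formal. Note in particular that the preceding remark — tiny objects are preserved by equivalences — is the special case in which $F$ is an equivalence, part of an adjoint equivalence $F \dashv G$ whose right adjoint $G$ is itself an equivalence and therefore colimit-preserving.
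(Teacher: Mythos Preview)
Your argument is correct and is essentially identical to the paper's proof: the paper simply observes that $\catt D(Fa,-)\cong\catt C(a,G(-))$ is colimit-preserving as a composite of colimit-preserving functors. Your additional remarks on $\cat V$-naturality, large categories, and the equivalence case are sound but go beyond what the paper records.
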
 	
\begin{proof}
  Indeed, if \( a \) is a tiny object,  then \( \catt D(Fa,-) \iso \catt
  C(a,G(-)) \) is colimit-preserving since it is a composite of
  colimit-preserving functors.
\end{proof}

For a functor \( p \colon e \to b \) in \( \VCat \), we denote by \( \Cauchy
p \colon \Cauchy e \to \Cauchy b \) the $\cat V$-functor induced by the
restriction of the left Kan extension \( \LKan_p \colon \VCAT[e ,\cat V] \to
\VCAT[b, \cat V] \) to the tiny objects. It is clear that $\Cauchy$ naturally
extends to a pseudofunctor $\VCat \to \VCat$.

\subsection{The characterization}

We start by establishing characterizations  of $\cat V$-fully faithful
morphisms and lax epimorphisms of $\VCat$ in terms of the induced morphism
between the Cauchy completions, and in terms of the induced functor between
the categories of $\cat V$-presheaves.

For a small $ c $, we denote by $\eta _c $ the full inclusion (induced by the
Yoneda embedding) of the $\cat V$-category $c$ into its Cauchy completion.
This defines a natural transformation $\mathrm{Id} \to \Cauchy $. Moreover,
recall that, by the universal property of the  Cauchy completion, the vertical
arrows of diagram \eqref{eq:cauchy.square} below are equivalences. Therefore:
 
\begin{lemma}\label{lem:cauchy-presheaves}
  If $p$ is a morphism of $\VCat$, then the induced functor between the
  categories of $\cat V$-presheaves ${\VCAT(p,\cat V)}$ is fully faithful
  (resp. lax epimorphic) if and only if $\VCAT( \Cauchy p,\cat V)$  is fully
  faithful (resp. lax epimorphic) as well.
\end{lemma}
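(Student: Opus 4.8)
The plan is to exploit the commutative square relating presheaf categories to Cauchy completions, together with the fact that its vertical legs are equivalences by the universal property of the Cauchy completion. Concretely, I would consider the square
\begin{equation}\label{eq:cauchy.square}
  \begin{tikzcd}
    \VCAT(\Cauchy b,\cat V) \ar[r,"\VCAT(\Cauchy p,\cat V)"] \ar[d,"\simeq"]
    & \VCAT(\Cauchy e,\cat V) \ar[d,"\simeq"] \\
    \VCAT(b,\cat V) \ar[r,"\VCAT(p,\cat V)"']
    & \VCAT(e,\cat V)
  \end{tikzcd}
\end{equation}
whose vertical arrows are the equivalences $\VCAT(\eta_b,\cat V)$ and $\VCAT(\eta_e,\cat V)$ induced by restriction along the Yoneda/Cauchy inclusions. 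First I would verify that this square commutes up to natural isomorphism; this is essentially the naturality of $\eta$ together with the compatibility of left Kan extension along $p$ with left Kan extension along the inclusions, which is where the definition of $\Cauchy p$ (as the restriction of $\LKan_p$ to tiny objects) enters.

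Once the square is in place, the statement follows from a purely $2$-categorical observation: given a commutative (up to isomorphism) square in $\CAT$ whose two vertical arrows are equivalences, the top horizontal arrow is fully faithful (resp. a lax epimorphism) if and only if the bottom one is. For fully faithfulness this is immediate, since fully faithful functors compose and equivalences are fully faithful, and one can cancel equivalences on either side. For lax epimorphisms the same argument applies once we recall that $p'$ is a lax epimorphism precisely when $\CAT(p',c)$ is fully faithful for all $c$: pre- and post-composing with the equivalences $\CAT(v,c)$ (which are themselves equivalences, hence fully faithful) shows $\CAT(\text{top},c)$ is fully faithful iff $\CAT(\text{bottom},c)$ is. Alternatively, one notes that being a lax epimorphism is a property stable under pasting with equivalences in any $2$-category.

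The main obstacle is the first step: pinning down precisely why \eqref{eq:cauchy.square} commutes, i.e. that $\LKan_p$ restricts correctly along the Cauchy inclusions so that $\VCAT(\eta_e,\cat V)\circ\VCAT(\Cauchy p,\cat V) \iso \VCAT(p,\cat V)\circ\VCAT(\eta_b,\cat V)$ as functors $\VCAT(\Cauchy b,\cat V)\to\VCAT(e,\cat V)$. This amounts to checking that restriction along $\eta$ is (pseudo)natural in the $\cat V$-category variable and interacts with $\Cauchy p = \LKan_p|_{\text{tiny}}$ as expected; concretely, $\VCAT(\eta_e,\cat V)$ is an equivalence because every $\cat V$-presheaf on $e$ extends essentially uniquely to one on $\Cauchy e$ that preserves the relevant colimits, and under this identification $\LKan_p$ corresponds to $\LKan_{\Cauchy p}$. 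Granting this — which is part of the standard theory of Cauchy completion recalled above — the remainder of the argument is the routine $2$-categorical cancellation sketched in the previous paragraph.
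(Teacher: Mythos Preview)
Your proposal is correct and follows essentially the same route as the paper: the paper states the lemma immediately after observing that the vertical arrows of the square \eqref{eq:cauchy.square} are equivalences, and deduces the result from that square without further comment. Your only over-complication is the discussion of commutativity: the paper has already recorded that $\eta \colon \mathrm{Id} \to \Cauchy$ is a (pseudo)natural transformation, so $\Cauchy p \circ \eta_e \cong \eta_b \circ p$, and applying $\VCAT(-,\cat V)$ gives the commutativity of \eqref{eq:cauchy.square} directly, with no need to unwind $\LKan_p$ on tiny objects.
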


\begin{equation}
  \label{eq:cauchy.square}
  \begin{tikzcd}[column sep=large]
    \VCAT(\Cauchy b, \cat V) 
      \ar[rr,"{\VCAT(\Cauchy p,\cat V)}"] 
      \ar[d, swap,"{\VCAT(\eta_b,\cat V)}"]
    && \VCAT(\Cauchy e, \cat V) 
      \ar[d,"{\VCAT(\eta_e,\cat V)}"] \\
    \VCAT(b, \cat V) \ar[rr,swap, "{\VCAT(p,\cat V)}"]
    && \VCAT(e, \cat V)
  \end{tikzcd}
\end{equation}

By the above and \cite[Theorem~5.6]{lucatelli_sousa}, we get, then, a full
characterization of lax epimorphisms in terms of $\Cauchy$. More precisely:

\begin{proposition}
  \label{lemma:le}
  The following conditions are equivalent for a \( \cat V \)-functor \( p
  \colon e \to b \) between small \( \cat V \)-categories:
  \begin{enumerate}[label=\emph{\roman*.},noitemsep]
    \item \label{eq:lemma:le1} 
      \(p\) is a lax epimorphism.
    \item\label{eq:lemma:le2} 
      \( \Cauchy p \) is a lax epimorphism.
    \item\label{eq:lemma:le3} 
      \( \VCAT(p, \cat V) \) is fully faithful.
  \end{enumerate}
\end{proposition}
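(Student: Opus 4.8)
The strategy is to prove a cycle of implications
\ref{eq:lemma:le1} $\Rightarrow$ \ref{eq:lemma:le3} $\Rightarrow$ \ref{eq:lemma:le2} $\Rightarrow$ \ref{eq:lemma:le1}, using the Cauchy square \eqref{eq:cauchy.square} and the cited results to do the heavy lifting. The key background facts I would invoke are: (a) Lemma~\ref{lem:cauchy-presheaves}, which says passing from $p$ to $\Cauchy p$ does not change whether $\VCAT(-,\cat V)$ is fully faithful; (b) the vertical equivalences in \eqref{eq:cauchy.square}; and (c) \cite[Theorem~5.6]{lucatelli_sousa}, which I read as providing the equivalence ``\(p\) is a lax epimorphism $\iff$ \( \VCAT(p,\cat V) \) is fully faithful'' — at least for small \( \cat V \)-categories, which is exactly our setting. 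Granting (c), the content of the proposition is mostly bookkeeping: one must check that \ref{eq:lemma:le3} is stable under replacing $p$ by $\Cauchy p$, and that $\Cauchy$ interacts well with the relevant $2$-categorical structure.

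First I would prove \ref{eq:lemma:le1} $\Leftrightarrow$ \ref{eq:lemma:le3}: this is a direct citation of \cite[Theorem~5.6]{lucatelli_sousa} applied to $p \colon e \to b$, since $e$ and $b$ are small. Next, for \ref{eq:lemma:le3} $\Leftrightarrow$ \ref{eq:lemma:le2}: by the same theorem applied to $\Cauchy p \colon \Cauchy e \to \Cauchy b$ (which is again a $\cat V$-functor between essentially small $\cat V$-categories, by our standing assumption on $\cat V$), we have ``\( \Cauchy p \) is a lax epimorphism $\iff$ \( \VCAT(\Cauchy p,\cat V) \) is fully faithful.'' Now Lemma~\ref{lem:cauchy-presheaves} tells us \( \VCAT(\Cauchy p,\cat V) \) is fully faithful iff \( \VCAT(p,\cat V) \) is; chaining these gives \ref{eq:lemma:le3} $\iff$ \ref{eq:lemma:le2}. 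Together these two biconditionals prove all three statements equivalent.

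An alternative, slightly more self-contained route for \ref{eq:lemma:le2} $\Leftrightarrow$ \ref{eq:lemma:le3} would be to read it straight off diagram \eqref{eq:cauchy.square}: since the two vertical arrows \( \VCAT(\eta_b,\cat V) \) and \( \VCAT(\eta_e,\cat V) \) are equivalences, the top horizontal \( \VCAT(\Cauchy p,\cat V) \) is fully faithful precisely when the bottom horizontal \( \VCAT(p,\cat V) \) is; this is the ``two out of three''-type argument for fully faithful functors in a commuting (up to isomorphism) square with invertible verticals, which is what Lemma~\ref{lem:cauchy-presheaves} already packages.

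The part I expect to be the main obstacle is not any single implication but rather making sure the hypotheses of \cite[Theorem~5.6]{lucatelli_sousa} genuinely apply in both places it is used — in particular that the theorem is stated (or trivially extends) to $\cat V$-functors between \emph{essentially} small $\cat V$-categories, which is the form in which $\Cauchy p$ presents itself. If the cited theorem is only stated for strictly small categories, one needs the (routine but worth a sentence) observation that lax epimorphy and fully faithfulness of $\VCAT(-,\cat V)$ are invariant under replacing a $\cat V$-category by an equivalent small one, so that one may replace $\Cauchy e, \Cauchy b$ by chosen small skeleta without affecting any of the three conditions. Everything else is a formal diagram chase.
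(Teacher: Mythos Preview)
Your proposal is correct and matches the paper's own proof: the paper also cites \cite[Theorem~5.6]{lucatelli_sousa} for \ref{eq:lemma:le1} $\Leftrightarrow$ \ref{eq:lemma:le3}, and then derives \ref{eq:lemma:le1} $\Leftrightarrow$ \ref{eq:lemma:le2} from Lemma~\ref{lem:cauchy-presheaves} together with that same equivalence (implicitly applied to $\Cauchy p$, exactly as you spell out). Your remark about the essential-smallness hypothesis when applying the cited theorem to $\Cauchy p$ is a fair technical point that the paper leaves implicit.
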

\begin{proof}
  The equivalence of \ref{eq:lemma:le1}  and \ref{eq:lemma:le3} was already
  established by (b) \( \leftrightarrow \) (c) of
  \cite[Theorem~5.6]{lucatelli_sousa}.  The equivalence of \ref{eq:lemma:le1}
  and \ref{eq:lemma:le2} follows from Lemma \ref{lem:cauchy-presheaves} and
  the equivalence \ref{eq:lemma:le1} \( \leftrightarrow \) \ref{eq:lemma:le3}. 
\end{proof}

Although it does not follow from (plain) duality, the counterpart of
Proposition \ref{lemma:le} holds for fully faithful morphisms. 

We start by recalling that  \textit{the unit of an adjunction \( f \adj g \)
in a $2$-category $\cat A $  is invertible iff \( f \) is fully faithful iff
\( g \) is a lax epimorphism.} Moreover, by coduality, the counit is an
isomorphism iff \(f\) is a lax epimorphism iff \(g\) is fully faithful. In
particular, we have that, \textit{assuming that a morphism $p$ has an adjoint,
it is an equivalence if, and only if, it is a fully faithful lax epimorphism}.
All these statements hold for $\cat A = \VCat $ when replacing fully
faithfulness by $\cat V$-fully faithfulness since, for adjoints,
both are equivalent (see \cite[Examples~2.4, (3)]{lucatelli_sousa} and
\cite[Lemma~5.2]{lucatelli_sousa}).

\begin{proposition}
  \label{lemma:ff}
  The following conditions are equivalent for a \( \cat V \)-functor \( p
  \colon e \to b \) between small \( \cat V \)-categories:
  \begin{enumerate}[label=\emph{\roman*.},noitemsep]
    \item\label{lemma:ffi}
      \(p\) is $\cat V$-fully faithful.
    \item\label{lemma:ffii}
      \( \Cauchy p \) is $\cat V$-fully faithful.
    \item\label{lemma:ffiii}
      \( \VCAT(p,\cat V) \) is a lax epimorphism.
  \end{enumerate}
\end{proposition}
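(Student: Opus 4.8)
The plan is to mirror the structure of the proof of Proposition \ref{lemma:le}, exploiting the "codual" statements recalled just before the proposition. First I would establish the equivalence of \ref{lemma:ffi} and \ref{lemma:ffiii}. For this, I would invoke the dual of \cite[Theorem~5.6]{lucatelli_sousa}: just as condition (b) $\leftrightarrow$ (c) there characterizes lax epimorphisms via full faithfulness of $\VCAT(p,\cat V)$, the dual characterization should say that $p$ is $\cat V$-fully faithful precisely when $\VCAT(p,\cat V)$ is a lax epimorphism. The cleanest route is to observe that $\VCAT(p,\cat V) \cong \LKan_p$ (up to equivalence, composing the presheaf functor with restriction), and that $\LKan_p \dashv \VCAT(p,\cat V)$ with unit essentially the density comparison; then the recalled fact "the counit of $f \dashv g$ is an isomorphism iff $g$ is fully faithful iff $f$ is a lax epimorphism" ties $p$ being $\cat V$-fully faithful (equivalently, $\LKan_p$ fully faithful, by Yoneda) to $\VCAT(p,\cat V)$ being a lax epimorphism.

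Next I would handle the equivalence of \ref{lemma:ffi} and \ref{lemma:ffii} using a Lemma \ref{lem:cauchy-presheaves}-style argument: I would state (or cite, analogously to that lemma) that $\VCAT(p,\cat V)$ is a lax epimorphism if and only if $\VCAT(\Cauchy p, \cat V)$ is a lax epimorphism, which follows from diagram \eqref{eq:cauchy.square} since its vertical legs $\VCAT(\eta_b,\cat V)$ and $\VCAT(\eta_e,\cat V)$ are equivalences (being induced by the universal property of the Cauchy completion). Combining this with the already-established \ref{lemma:ffi} $\leftrightarrow$ \ref{lemma:ffiii}, applied both to $p$ and to $\Cauchy p$, yields $p$ is $\cat V$-fully faithful $\iff$ $\VCAT(p,\cat V)$ is a lax epimorphism $\iff$ $\VCAT(\Cauchy p,\cat V)$ is a lax epimorphism $\iff$ $\Cauchy p$ is $\cat V$-fully faithful. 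Here it is worth noting that $\cat V$-full faithfulness rather than $2$-categorical full faithfulness is the right notion: $\eta_e$ need not have an adjoint, but the comparison across \eqref{eq:cauchy.square} is genuinely an equivalence, so the transfer still goes through.

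The main obstacle I anticipate is pinning down the precise dual of \cite[Theorem~5.6]{lucatelli_sousa} and verifying that Lemma \ref{lem:cauchy-presheaves} has a lax-epimorphism counterpart with the same proof — in other words, checking that the reference genuinely provides the "$p$ is $\cat V$-fully faithful $\iff$ $\VCAT(p,\cat V)$ is a lax epimorphism" implication and is not merely stated for full faithfulness. If the reference only gives the lax-epimorphism direction, I would instead argue directly: $p$ being $\cat V$-fully faithful means $\LKan_p$ is fully faithful (an enriched density/Yoneda computation, since $\LKan_p$ restricted along the Yoneda embedding recovers $p$ post-composed with Yoneda, and fully faithful functors induce fully faithful left Kan extensions on presheaf categories); then since $\LKan_p \dashv \VCAT(p,\cat V)$, full faithfulness of the left adjoint is equivalent, by the recalled coduality, to the right adjoint $\VCAT(p,\cat V)$ being a lax epimorphism. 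This direct argument sidesteps any uncertainty about the exact scope of the cited theorem, at the cost of one short enriched-category computation.
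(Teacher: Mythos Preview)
Your proposal is correct and follows essentially the same approach as the paper: reduce via Lemma~\ref{lem:cauchy-presheaves} to showing \ref{lemma:ffi}~$\leftrightarrow$~\ref{lemma:ffiii}, then use the adjunction $\LKan_p \dashv \VCAT(p,\cat V)$ together with the Yoneda-based fact that $p$ is $\cat V$-fully faithful iff $\LKan_p$ is fully faithful (the paper cites \cite[Proposition~4.23]{Kel05} for this). Two minor remarks: Lemma~\ref{lem:cauchy-presheaves} already states the lax-epimorphism case explicitly, so no separate ``Lemma~\ref{lem:cauchy-presheaves}-style argument'' is needed; and the relevant recalled fact is the \emph{direct} one (unit invertible iff $f$ fully faithful iff $g$ lax epi), not its codual---your final paragraph applies it correctly despite the label.
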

\begin{proof}
  As in the case of Proposition \ref{lemma:le}, by Lemma
  \ref{lem:cauchy-presheaves}  it is enough to prove that \ref{lemma:ffi} and
  \ref{lemma:ffiii} are equivalent.
 
  Recall that we have an adjunction $\LKan_p \adj \VCAT(p,\cat V ) $. Hence,
  \( \VCAT(p,\cat V) \) is a lax epimorphism if and only if $\LKan_p $ is
  fully faithful. We complete the proof by observing that, as a consequence of
  the Yoneda Lemma, \( p \) is $\cat V$-fully faithful if and only if \(
  \LKan_p \) is fully faithful (see, for instance, \cite[Proposition
  4.23]{Kel05}). 
\end{proof}

Finally, combining the characterizations of $\cat V$-fully faithful morphisms
and lax epimorphisms, we get:

\begin{theorem}
  \label{thm:sect2}
  The following conditions are equivalent for a \( \cat V \)-functor \( p
  \colon e \to b \) between small \( \cat V \)-categories:
  \begin{enumerate}[label=\emph{\roman*.},noitemsep]
    \item\label{thm:sect2i}
      \(p\) is a $\cat V$-fully faithful lax epimorphism.
    \item\label{thm:sect2ii}
      \( \Cauchy p \) is a $\cat V$-fully faithful lax epimorphism.
    \item\label{thm:sect2iii}
      \( \VCAT(p, \cat V) \) is a fully faithful lax epimorphism.
    \item\label{thm:sect2iv}
      \( \Cauchy p \) is an equivalence.
    \item\label{thm:sect2v}
      \( \VCAT(p, \cat V) \) is an equivalence.
 \end{enumerate}
\end{theorem}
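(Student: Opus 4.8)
The plan is to assemble the equivalence from the two preceding propositions plus the adjunction facts recalled just before Proposition \ref{lemma:ff}. First I would note that \ref{thm:sect2i} $\Leftrightarrow$ \ref{thm:sect2ii} is immediate by combining Proposition \ref{lemma:le} (\ref{eq:lemma:le1} $\Leftrightarrow$ \ref{eq:lemma:le2}) with Proposition \ref{lemma:ff} (\ref{lemma:ffi} $\Leftrightarrow$ \ref{lemma:ffii}): $p$ is a $\cat V$-fully faithful lax epimorphism iff $\Cauchy p$ is one. Likewise \ref{thm:sect2iii} is, by definition, the conjunction of ``$\VCAT(p,\cat V)$ is fully faithful'' and ``$\VCAT(p,\cat V)$ is a lax epimorphism'', which by Proposition \ref{lemma:le}\,(\ref{eq:lemma:le1} $\Leftrightarrow$ \ref{eq:lemma:le3}) and Proposition \ref{lemma:ff}\,(\ref{lemma:ffi} $\Leftrightarrow$ \ref{lemma:ffiii}) is exactly ``$p$ is a lax epimorphism and $p$ is $\cat V$-fully faithful'', i.e. \ref{thm:sect2i}. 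So \ref{thm:sect2i} $\Leftrightarrow$ \ref{thm:sect2iii} as well.

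It remains to bring in \ref{thm:sect2iv} and \ref{thm:sect2v}. For \ref{thm:sect2iv}, the key observation is that $\Cauchy p$ always has an adjoint: $\Cauchy$ sends $p$ to the restriction of $\LKan_p$ to tiny objects, and $\LKan_p \adj \VCAT(p,\cat V)$ restricts to an adjunction on Cauchy completions (the right adjoint $\VCAT(p,\cat V)$ preserves colimits, so by the first Lemma of Section \ref{section:Fully-faithful-lax-epimorphisms} its left adjoint preserves tiny objects, and dually — alternatively one invokes the standard fact that $\Cauchy$ is a pseudofunctor valued in $\cat V$-categories where every such Kan extension adjunction descends). Since $\Cauchy p$ has an adjoint, the recalled principle ``a morphism with an adjoint is an equivalence iff it is a fully faithful lax epimorphism'' — valid in $\VCat$ with $\cat V$-fully faithfulness, as noted before Proposition \ref{lemma:ff} — gives \ref{thm:sect2ii} $\Leftrightarrow$ \ref{thm:sect2iv}. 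For \ref{thm:sect2v}, the same principle applies to $\VCAT(p,\cat V)$: it has the adjoint $\LKan_p$, so it is an equivalence iff it is a fully faithful lax epimorphism, which is \ref{thm:sect2iii}; hence \ref{thm:sect2iii} $\Leftrightarrow$ \ref{thm:sect2v}. Chaining all these equivalences closes the cycle.

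The main obstacle I anticipate is the justification that $\Cauchy p$ genuinely has an adjoint in $\VCat$ — i.e. that the adjunction $\LKan_p \adj \VCAT(p,\cat V)$ between presheaf categories restricts along the full inclusions of Cauchy completions. One direction (the left adjoint preserves tiny objects) is handled by the first Lemma of the section since $\VCAT(p,\cat V)$ is colimit-preserving; the subtler point is that $\VCAT(p,\cat V)$ itself restricts to tiny objects, for which one uses that $\VCAT(p,\cat V)$ is itself a left adjoint (to $\RKan_p$) and invokes the Lemma again, or cites the standard description of $\Cauchy$ as a pseudofunctor on $\VCat$ for which these restricted adjunctions are part of the package. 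Once this is in place, everything else is a formal diagram chase through Propositions \ref{lemma:le} and \ref{lemma:ff} and the adjunction-unit/counit dictionary.
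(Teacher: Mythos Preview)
Your argument for \ref{thm:sect2i} $\Leftrightarrow$ \ref{thm:sect2ii} $\Leftrightarrow$ \ref{thm:sect2iii} and for \ref{thm:sect2iii} $\Leftrightarrow$ \ref{thm:sect2v} is correct and matches the paper: the first block is exactly the conjunction of Propositions~\ref{lemma:le} and~\ref{lemma:ff}, and the second is the adjunction $\LKan_p \dashv \VCAT(p,\cat V)$ together with the ``adjoint $+$ fully faithful lax epi $\Rightarrow$ equivalence'' principle.

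The gap is in your treatment of \ref{thm:sect2ii} $\Leftrightarrow$ \ref{thm:sect2iv}. The claim that $\Cauchy p$ always has an adjoint is false, and your proposed justification breaks down precisely where you flag it. To restrict the adjunction $\LKan_p \dashv \VCAT(p,\cat V)$ to Cauchy completions you need $\VCAT(p,\cat V)$ to preserve tiny objects; invoking the Lemma with $\VCAT(p,\cat V)$ as left adjoint and $\RKan_p$ as right adjoint would require $\RKan_p$ to preserve colimits, which it generally does not. Concretely, for $\cat V=\Set$ take $p \colon \mathsf 1 \to M$ with $M$ a nontrivial group viewed as a one-object category: then $\Cauchy p \colon \mathsf 1 \to M$ has no adjoint on either side, since $M(\ast,\ast)=M$ is not a singleton. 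The fallback ``these restricted adjunctions are part of the $\Cauchy$ package'' is not a fact one can cite.

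The paper circumvents this entirely. It never asserts an adjoint for $\Cauchy p$; instead it closes the cycle via \ref{thm:sect2iii} $\to$ \ref{thm:sect2v} $\to$ \ref{thm:sect2iv} $\to$ \ref{thm:sect2ii}. The step \ref{thm:sect2v} $\to$ \ref{thm:sect2iv} uses only that equivalences preserve tiny objects: if $\VCAT(p,\cat V)$ is an equivalence then so is $\LKan_p$, and restricting an equivalence to the full subcategories of tiny objects on both sides yields an equivalence $\Cauchy p$. The step \ref{thm:sect2iv} $\to$ \ref{thm:sect2ii} is immediate, since any equivalence is a $\cat V$-fully faithful lax epimorphism. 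You already have everything else in place, so replacing your direct \ref{thm:sect2ii} $\Leftrightarrow$ \ref{thm:sect2iv} with this detour through \ref{thm:sect2v} fixes the proof.
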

\begin{proof}
  Combining Propositions \emph{\ref{lemma:ff}} and \emph{\ref{lemma:le}}, we
  obtain the equivalence of \emph{\ref{thm:sect2i}}, \emph{\ref{thm:sect2ii}}
  and \emph{\ref{thm:sect2iii}}. 

  Of course, \emph{\ref{thm:sect2v}} \( \to \) \emph{\ref{thm:sect2iv}}, since
  tiny objects are preserved by equivalences. Moreover, we have
  \emph{\ref{thm:sect2iv}} \( \to \) \emph{\ref{thm:sect2ii}} and
  \emph{\ref{thm:sect2v}} \( \to \) \emph{\ref{thm:sect2iii}} \textit{a
  fortiori}, taking into account that, for adjoints in $\VCat$, fully
  faithfulness coincides with $\cat V$-fully faithfulness.

  We have \emph{\ref{thm:sect2iii}} \( \to \) \emph{\ref{thm:sect2v}}, as \(
  \LKan_p \adj \VCAT(p,\cat V) \), and \(\VCAT(p,\cat V) \) is a fully
  faithful lax epimorphism. 
\end{proof}

  \section{Discrete and Split Fibrations}
    \label{section:Discrete-and-Split-Fibrations}
    Sobral provided a characterization of effective \( \CAT(-, \Set) \)-descent
and \( \CAT(-, \Set) \)-descent functors~\cite{MR2107401}. We show, herein,
how we can extend her characterization to the case of split fibrations. We
start by extending our characterization of fully faithful and lax epimorphic
morphisms in $\Set\dash\Cat$.

\begin{proposition}
  \label{prop:lax-epimorphism-split-fibration}
  A functor \(p \colon e \to b\) between small categories is fully faithful
  (resp. lax epimorphic) if, and only if,  \( \CAT(p,\Cat) \) is lax
  epimorphic (resp. fully faithful).
\end{proposition}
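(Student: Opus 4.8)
The plan is to reduce this statement about the $2$-functor $\CAT(-,\Cat)\colon \Cat^{\op}\to\CAT$ to the already-established Theorem~\ref{thm:sect2} with $\cat V=\Set$. The key observation is that $\Cat$, as a $\Set$-category, is (equivalent to) the free cocompletion of the terminal category with respect to a suitable class of colimits; more usefully, $\Cat=\Set\dash\Cat$ is a cotensored, complete and cocomplete category, and $\CAT(e,\Cat)$ is the category of functors out of $e$ into this specific category. I would first record that, for any functor $p\colon e\to b$, the square analogous to \eqref{eq:cauchy.square} but with $\cat V$ replaced by $\Cat$ as the \emph{target} (i.e.\ comparing $\CAT(p,\Cat)$ with $\CAT(\Cauchy p,\Cat)$) has equivalences as vertical arrows — this follows from the universal property of the Cauchy completion, since $\Cat$ is Cauchy complete (it has all the relevant absolute colimits, in particular splittings of idempotents) and cocomplete, so precomposition with $\eta_c\colon c\to\Cauchy c$ induces an equivalence $\CAT(\Cauchy c,\Cat)\to\CAT(c,\Cat)$. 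Thus, exactly as in Lemma~\ref{lem:cauchy-presheaves}, the properties in question only depend on $\Cauchy p$.

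Next I would invoke Theorem~\ref{thm:sect2} in the case $\cat V=\Set$: the conditions ``$p$ is fully faithful'', ``$\Cauchy p$ is fully faithful lax epimorphism'' and ``$\Cauchy p$ is an equivalence'' are... wait, more carefully: the statement to prove has two halves. For the ``fully faithful $\Leftrightarrow$ $\CAT(p,\Cat)$ lax epimorphic'' direction, I would argue that $p$ fully faithful in $\Cat$ is equivalent (by Proposition~\ref{lemma:ff} with $\cat V=\Set$) to $\LKan_p\colon\CAT(e,\Set)\to\CAT(b,\Set)$ being fully faithful, i.e.\ to $\CAT(p,\Set)$ being a lax epimorphism. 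The remaining work is to upgrade the coefficient category from $\Set$ to $\Cat$: here I would use that $\Cat$ is a \emph{full reflective} (indeed bi-reflective, via $\pi_0$ and the discrete-objects functor) subcategory-like relationship — more precisely, $\Set$ sits inside $\Cat$ as the discrete categories, with both adjoints, and $\Cat$ itself is a cocomplete category, so the argument of Proposition~\ref{lemma:ff} applies verbatim with $\cat V=\Set$ but with the presheaf $2$-functor $\CAT(-,\Set)$ replaced by $\CAT(-,\Cat)$, because the only properties of $\Set$ used were cocompleteness and the Yoneda lemma, both of which survive when testing against the cotensored cocomplete category $\Cat$. Concretely: $\CAT(p,\Cat)$ has left adjoint $\LKan_p$ computed pointwise by a $\Cat$-weighted colimit, $\CAT(p,\Cat)$ is a lax epimorphism iff this $\LKan_p$ is fully faithful, and by the (enriched) Yoneda lemma applied with representables landing in $\Set\subseteq\Cat$, fully faithfulness of $\LKan_p$ on all of $\CAT(e,\Cat)$ is detected already on the representables, hence is equivalent to $p$ being fully faithful.

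For the other half, ``$p$ lax epimorphic $\Leftrightarrow$ $\CAT(p,\Cat)$ fully faithful'', I would run the dual argument: $\CAT(p,\Cat)$ is fully faithful iff the counit of $\LKan_p\dashv\CAT(p,\Cat)$ is invertible iff $\LKan_p$ is a lax epimorphism; and I would then show $\LKan_p\colon\CAT(e,\Cat)\to\CAT(b,\Cat)$ is a lax epimorphism iff $p$ is, mirroring the proof of Proposition~\ref{lemma:le} / Theorem~5.6 of \cite{lucatelli_sousa}. Alternatively, and perhaps more cleanly, I would combine Theorem~\ref{thm:sect2}\eqref{thm:sect2v} with the reflection remark: since $\Cauchy p$ being an equivalence is equivalent to $\CAT(\Cauchy p,\cat V)$ being an equivalence for $\cat V=\Set$, and since $\CAT(-,\Cat)$ also inverts Cauchy-equivalences, one gets $\CAT(p,\Cat)\eqv\CAT(\Cauchy p,\Cat)$, and then fully-faithfulness / lax-epimorphy of $\CAT(p,\Cat)$ reduces to the corresponding property of $\CAT(\Cauchy p,\Cat)$, which one computes directly from $\Cauchy p$ using that $\Cat$ is Cauchy complete. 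I expect the main obstacle to be the bookkeeping needed to justify that the Yoneda-lemma step still works with coefficients in $\Cat$ rather than $\Set$ — i.e.\ verifying that ``fully faithful on representables implies fully faithful'' and ``lax epimorphism is detected on representables'' remain valid when the target of the (co)presheaves is the cotensored cocomplete category $\Cat$; this is where I would either cite an appropriate enriched-Yoneda statement (e.g.\ \cite[Proposition~4.23]{Kel05} suitably adapted) or spell out that $\Cat$ having a dense generator of discrete categories suffices. Once that point is secured, everything else is a transcription of the $\cat V=\Set$ arguments already given for Propositions~\ref{lemma:le} and~\ref{lemma:ff}.
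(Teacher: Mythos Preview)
Your approach can be made to work, but it takes a genuinely different route from the paper. You keep $\cat V=\Set$ and change the \emph{target} of the presheaf $2$-functor from $\Set$ to $\Cat$, then re-run the $\LKan_p$/Yoneda arguments of Propositions~\ref{lemma:le} and~\ref{lemma:ff} for that new target. The paper instead changes the \emph{base of enrichment}: it considers the full $2$-functor $J\colon\Set\dash\CAT\to\Cat\dash\CAT$ taking each category to the corresponding locally discrete $2$-category, notes that $J$ has both left and right $2$-adjoints and therefore preserves and reflects fully faithful morphisms and lax epimorphisms (by \cite[Lemma~2.8, Remark~2.5]{lucatelli_sousa}), and then applies Propositions~\ref{lemma:le} and~\ref{lemma:ff} with $\cat V=\Cat$ directly to $J(p)$, finishing via the identification $\Cat\dash\CAT(J(p),\Cat)\cong\CAT(p,\Cat)$. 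The paper's route is precisely the payoff for having proved Section~\ref{section:Fully-faithful-lax-epimorphisms} for general $\cat V$: it costs two lines and no new verification. Your route stays self-contained at $\cat V=\Set$ but must redo the ``detected on representables'' step for a non-$\Set$ target; this is doable (for the lax-epi half, any $F,G\colon b\to\Cat$ factor through a small full subcategory, and the converse comes from restricting along the fully faithful inclusion $\Set\hookrightarrow\Cat$), but note that the Cauchy-completion square you open with is never actually used in your argument, and your specific fallback that a dense generator of \emph{discrete} objects in $\Cat$ would close the bookkeeping gap is not correct as stated --- the standard dense generators of $\Cat$, such as $\Delta$, are not discrete.
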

\begin{proof}
  The $2$-functor \( J \colon \Set\dash \CAT \to \Cat \dash \CAT \), taking every category to the corresponding locally discrete 2-category, is a full
  $2$-functor, and it has left and right 2-adjoints. Hence,  it	preserves and
  reflects fully faithful morphisms and lax epimorphisms by
  \cite[Lemma~2.8]{lucatelli_sousa} and  \cite[Remark~2.5]{lucatelli_sousa}.
	
  By  Propositions \ref{lemma:ff} and \ref{lemma:le}, $J(p)$ is fully faithful
  (resp. lax epimorphic) if and only if $\Cat\dash\CAT \left( J(p),
  \Cat\right) \cong \CAT \left( p, \Cat\right)  $ is lax epimorphic (resp.
  fully faithful).
\end{proof}


\begin{theorem}
  \label{thm:section3}
  Let \( p \colon e \to b \) be a functor between small categories. Denoting
  by $\CKomp{\EQP}$ the comparison functor of the codescent category of the
  factorization \eqref{eq:descent-factorization-introduction-codescent}, the
  following statements are equivalent.
  \begin{enumerate}[label=\emph{\roman*.},noitemsep]
    \item 
      $\CKomp{\EQP}$ is lax epimorphic (resp. fully faithful lax epimorphic);
    \item 
      $p$ is of \( \CAT(-,\Set) \)-descent (resp. effective \( \CAT(-,\Set)
      \)-descent); 
    \item 
      $p$ is of \( \CAT(-,\Cat) \)-descent (resp. effective \( \CAT(-,\Cat)
      \)-descent);
    \item 
      $\Cauchy {\CKomp{\EQP}}$ is lax epimorphic (resp. an equivalence)    
  \end{enumerate}   
\end{theorem}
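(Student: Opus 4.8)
The plan is to assemble the theorem from the two structural inputs already in hand: Lemma \ref{lemma:fundamental-basic}, which reduces effective $\pF F$-descent to a property of $\pF{F}(\CKomp{\EQP})$, and the characterization of fully faithful lax epimorphisms in $\Cat$ via $\CAT(p,\Cat)$, $\VCAT(p,\cat V)$, and $\Cauchy p$ (Theorem \ref{thm:sect2} and Proposition \ref{prop:lax-epimorphism-split-fibration}). The key observation to verify first is that both indexed categories in play, $\cat F_D = \CAT(-,\Set)$ and $\cat F = \CAT(-,\Cat)$, are $2$-functors $\Cat^\op \to \CAT$ that preserve two-dimensional limits — in particular lax descent objects. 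This is because $\CAT(-,\cat D)$ sends colimits in $\Cat$ to limits, and the diagram \eqref{eq:kernel-pair-internal-groupoid} defining $\EQP$ is built from finite limits (pullbacks) in $\Cat$, so its image under $\CAT(-,\cat D)$ is precisely a lax descent diagram whose limit is computed pointwise. Granting this, Lemma \ref{lemma:fundamental-basic} applies to both $\cat F_D$ and $\cat F$, so:
\begin{itemize}[noitemsep]
  \renewcommand{\labelitemi}{--}
  \item $p$ is of (effective) $\CAT(-,\Set)$-descent $\iff$ $\CAT(\CKomp{\EQP},\Set)$ is fully faithful (resp. an equivalence);
  \item $p$ is of (effective) $\CAT(-,\Cat)$-descent $\iff$ $\CAT(\CKomp{\EQP},\Cat)$ is fully faithful (resp. an equivalence).
\end{itemize}

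Next I would translate these conditions on $\CKomp{\EQP}$ using the section's characterizations, applied to the functor $\CKomp{\EQP}\colon \CODESC{\EQP}\to b$. For the descent (non-effective) case: by Proposition \ref{lemma:le}, $\CAT(\CKomp{\EQP},\Set)=\Set\dash\CAT(\CKomp{\EQP},\Set)$ is fully faithful iff $\CKomp{\EQP}$ is a lax epimorphism; by Proposition \ref{prop:lax-epimorphism-split-fibration}, $\CAT(\CKomp{\EQP},\Cat)$ is fully faithful iff $\CKomp{\EQP}$ is a lax epimorphism — the same condition. This already gives the equivalence of (i), (ii), (iii) in the non-parenthetical reading, and of (i) with (iv) via Proposition \ref{lemma:le} \ref{eq:lemma:le1}$\leftrightarrow$\ref{eq:lemma:le2}, noting that $\Cauchy{\CKomp{\EQP}}$ lax epimorphic is one of the equivalent conditions. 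For the effective case, I would argue that $\CAT(\CKomp{\EQP},\Set)$ is an equivalence iff $\CKomp{\EQP}$ is a $\cat V$-fully faithful lax epimorphism (i.e. a fully faithful lax epimorphism in $\Cat$): Theorem \ref{thm:sect2} gives precisely \ref{thm:sect2iii}$\leftrightarrow$\ref{thm:sect2i}$\leftrightarrow$\ref{thm:sect2iv}$\leftrightarrow$\ref{thm:sect2v} for $\cat V = \Set$, where \ref{thm:sect2v} reads ``$\Set\dash\CAT(\CKomp{\EQP},\Set)$ is an equivalence''. For the $\Cat$-coefficient version, I would combine Proposition \ref{prop:lax-epimorphism-split-fibration} (which handles the lax-epi part: $\CAT(\CKomp{\EQP},\Cat)$ is fully faithful iff $\CKomp{\EQP}$ is a lax epimorphism) with the analogue of Proposition \ref{lemma:ff}/\ref{lemma:le} for $\Cat$-coefficients obtained through the $2$-functor $J$: $\CAT(\CKomp{\EQP},\Cat)\cong \Cat\dash\CAT(J(\CKomp{\EQP}),\Cat)$ is a lax epimorphism iff $J(\CKomp{\EQP})$, hence $\CKomp{\EQP}$, is fully faithful. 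So $\CAT(\CKomp{\EQP},\Cat)$ is a fully faithful lax epimorphism — equivalently an equivalence, since it has a left adjoint $\LKan$ — exactly when $\CKomp{\EQP}$ is a fully faithful lax epimorphism in $\Cat$. This matches the $\Set$-case condition, closing the loop among (i), (ii), (iii) effective; and (iv) effective follows from Theorem \ref{thm:sect2} \ref{thm:sect2iv} (``$\Cauchy{\CKomp{\EQP}}$ is an equivalence'') being equivalent to \ref{thm:sect2i}.

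The main obstacle I anticipate is the bookkeeping around the $\Cat$-enriched coefficient case, since Theorem \ref{thm:sect2} was stated and proved for a single base $\cat V$ with coefficients in $\cat V$ itself, whereas here we want $\cat V = \Set$ but coefficients in $\Cat$. The clean way around this is to route everything through $\Set\dash\CAT(-,\Set)$ and $\Cat\dash\CAT(-,\Cat)$ via the full, bi-adjoint $2$-functor $J\colon \Set\dash\CAT\to\Cat\dash\CAT$ — exactly as in the proof of Proposition \ref{prop:lax-epimorphism-split-fibration} — so that the ``fully faithful lax epimorphism $=$ equivalence'' step is always an instance of Theorem \ref{thm:sect2} applied either to $\cat V = \Set$ or to $\cat V = \Cat$, together with the fact that $J$ preserves and reflects fully faithful morphisms and lax epimorphisms, and that $\CAT(p,\Cat)$ always carries the left adjoint $\LKan_p$ needed to upgrade ``fully faithful lax epimorphism'' to ``equivalence''. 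A secondary point requiring care is confirming that the codescent object $\CODESC{\EQP}$ exists in $\Cat$ and that the factorization \eqref{eq:descent-factorization-introduction-codescent} is the one used — but this is guaranteed by the standing hypothesis of Section \ref{section:Grothendieck-Descent-Codescent} that $\twcat = \Cat$ has lax codescent objects, so no new work is needed there.
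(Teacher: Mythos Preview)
Your proposal is correct and follows essentially the same route as the paper: apply Lemma~\ref{lemma:fundamental-basic} to both $\CAT(-,\Set)$ and $\CAT(-,\Cat)$ (which, as representable $2$-functors, preserve the relevant two-dimensional limits), and then translate the resulting conditions on $\CAT(\CKomp{\EQP},\Set)$ and $\CAT(\CKomp{\EQP},\Cat)$ into properties of $\CKomp{\EQP}$ and $\Cauchy\CKomp{\EQP}$ via Proposition~\ref{lemma:le}, Proposition~\ref{prop:lax-epimorphism-split-fibration}, and Theorem~\ref{thm:sect2}. Your extra care with the $\Cat$-coefficient bookkeeping through $J$ is exactly the content already packaged in Proposition~\ref{prop:lax-epimorphism-split-fibration}, so no genuinely new ingredient is needed beyond what the paper invokes.
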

\begin{proof} By Lemma \ref{lemma:fundamental-basic}, putting ${\cat F}=\CAT(-,\Cat):\Cat^{\op}\to \CAT$, we know that $p$ is of $\CAT(-,\Cat)$-descent (resp. effective $\CAT(-,\Cat)$-descent) if and only if $\CAT(\CKomp{\EQP},\Cat)$   is fully faithful (resp. an equivalence). By Proposition \ref{lemma:le} and Theorem \ref{thm:sect2}, this is equivalent to i., and also to iv. Using a similar argumentation for the functor ${\cat F}=\CAT(-,\Set)$ and Proposition \ref{prop:lax-epimorphism-split-fibration}, we conclude that ii. and iii. are equivalent.
\end{proof}

  
 

    \label{section:Beck-Chevalley}
    \begin{remark}[A word on Beck-Chevalley]
  The indexed category of discrete fibrations (equivalently, the indexed
  category of split fibrations) is particularly interesting because it
  provides us with a source of counterexamples in descent theory -- it is a
  fruitful example of an indexed category (coming from a bifibration) that
  does not satisfy the Beck-Chevalley condition.

  Although it is simple to directly verify that $\CAT\left( -, \Set \right) $
  and $\CAT\left( -, \Cat \right) $ do not satisfy the Beck-Chevalley
  condition, we can do that indirectly: by showing that the
  B\'{e}nabou-Roubaud theorem does not hold for these cases (and, hence, BC
  does not hold as well).

  More precisely, we know that every effective $\CAT\left( -, \Cat
  \right)$-descent morphism induces a monadic functor (by
  \cite[Theorem~4.7]{MR4266479}). However, we can show that there are functors
  $p$ such that $\CAT\left( p, \Cat \right)$ is monadic but $p$ is not of
  effective $\CAT\left( -, \Cat \right)$-descent.

  For example, let \(p \colon \mathsf 1 \to b \) be a functor, where
  $\mathsf{1}$ is the terminal category. Note that \( p^* = \Cat(p,\Cat) \)
  is monadic whenever \(b\) has only one object, but \( p^* \) is an 
  effective $\CAT\left( -, \Cat \right)$-descent morphism if and only if $p$
  is an equivalence. To see this, note that \(p\) is of effective \(
  \CAT(-,\Cat) \)-descent iff \( \CAT(p,\Cat)\) is an
  equivalence.\footnote{This actually holds more generally. See, for
  instance, \cite[Proposition~4.3]{MR4266479}.} Therefore, by
  Proposition~\ref{prop:lax-epimorphism-split-fibration}, we conclude that \(p
  : \mathsf{1}\to b \) is of effective $\CAT(-, \Cat)$-descent if and only if
  $p$ is fully faithful lax epimorphic: and, of course, since the domain of
  $p$ is terminal, this is equivalent to $p$ being an equivalence.

  We refer to \cite[Remark~3]{MR2107401} and \cite[Remark~4.8]{MR4266479} for
  more examples of morphisms inducing monadic functors that are not of
  effective $\CAT\left( -, \Cat \right) $-descent (effective $\CAT\left( -,
  \Set \right) $-descent).
\end{remark}

  \section{Future work}
    \label{section:Epilogue}
    The main contribution of the present work was showing that, from the formal
observations on codescent of Section
\ref{section:Grothendieck-Descent-Codescent} and the characterization of fully
faithful lax epimorphisms of Sections
\ref{section:Fully-faithful-lax-epimorphisms} and
\ref{section:Discrete-and-Split-Fibrations}, we were able to extend Sobral's
characterization of discrete fibrations for the case of split fibrations.

The authors also believe that the present approach can be insightful towards
the study of effective descent morphisms w.r.t. some other interesting indexed
categories defined in $2$-categories.  We give two examples below.

The most natural line of work following this would be the study of effective
descent morphisms in \( \VCat \). By the observations of the present paper,
this would solely rely on the thorough study of the codescent object of
\eqref{eq:descent-factorization-introduction-codescent} in \( \VCat \) and
its Cauchy completion.

More interestingly, discrete fibrations in the context of \( (\cat T,\cat V)
\)-categories (and, more precisely, in the context of \cite{ MR1957813, MR2491799})
provides us with the indexed category $\cat E $ of \'{e}tale morphisms (see,
for instance, \cite{PhDPIER, MR3606495}).  Even for
$\cat V$ thin, the study and characterization of effective $\cat E $-descent
morphisms in this setting  is still generally an open problem.

By the present work, we can extend Sobral's techniques to this general
setting. Although we leave it to future work, we roughly sketch the general
ideas below (we refer to \cite{MR2491799} for the basic
definitions related to these brief comments).

The first step would be to characterize the lax epimorphisms in that context.
We conjecture that these are precisely the $(\cat T,\cat V)$-functors $(X,a
) \to (Y,b)$ such that $f_\ast \cdot f^\ast = b $ (following the notation of
\cite[p.~188]{MR2491799}) for $\left(\cat T, \cat V\right)$-bimodules.
A next step would be to fully study the codescent factorization
in the $2$-category of $(\cat T,\cat V)$-categories.  More precisely, this
study consists of constructing the suitable codescent objects (if/when it
exists).
Moreover, provided with the work developed in  \cite{MR2491799}, we can also
study the relation of this characterization with the Cauchy completion in this
setting.

  \section*{Acknowledgments}
    \label{section:Acknowledgments}
    We thank Maria Manuel Clementino, George Janelidze and Manuela Sobral for
all the fruitful discussions and lessons in descent theory. 

We realized this work during our research stay at MFO, Oberwolfach, in 2022.
We want to thank Stephan Klaus, Tatjana Ruf, Gerhard Huisken, Matthias Hieber,
Silke Okon, Andrea Schillinger, Verena Franke, Jennifer Hinneburg, Ronja
Firner, Charlotte Endres, Anton Herrmann,  Stefanie Reith, Petra Lein, who
warmly hosted us at the Mathematisches Forschungsinstitut Oberwolfach.


  \bibliographystyle{plain-abb}
  \bibliography{bibliography.bib}

\end{document}